\tiny\color{gray},
\newtheorem{Theorem}{Theorem}[section]
\newtheorem{Proposition}[Theorem]{Proposition}
\newtheorem{Lemma}[Theorem]{Lemma}
\newtheorem{Corollary}[Theorem]{Corollary}
\newtheorem{Example}[Theorem]{Example}
\theoremstyle{definition}
\title{Computing sandpile configurations using integer linear programming}
\author[CA]{Carlos A. Alfaro}
\ead[CA]{carlos.alfaro@banxico.org.mx, alfaromontufar@gmail.com}
\address[CA]{
Banco de M\'exico,
Ciudad de M\'exico, M\'exico.
}
\author[CV]{Carlos E. Valencia}
\ead[CV]{cvalencia@math.cinvestav.edu.mx, cvalencia75@gmail.com}
\author[CA]{Marcos~C.~Vargas}
\ead{marcos.vargas@banxico.org.mx}
\address[CV]{
Departamento de
Matem\'aticas\\
Centro de Investigaci\'on y de Estudios Avanzados del IPN\\
Apartado Postal 14--740 \\
07000 Ciudad de M\'exico, M\'exico.
}
\begin{document}

%===================================%
%==============Resumen==============%
%===================================%

\begin{abstract}
The set of recurrent configurations of a graph together with a sum operation form the sandpile group.
It is well known that recurrent sandpile configurations can be characterized as the optimal solution of certain optimization problems.
In this article, we present two new integer linear programming models, one that computes recurrent configurations and other that computes the order of the configuration.
Finally, by using duality of linear programming, we are able to compute the identity configuration for the cone of a regular graph.
\end{abstract}

\begin{keyword}
Sandpile group, recurrent configurations, integer linear programming
\end{keyword}

\maketitle

%===================================%
%=============Introduction=============%
%===================================%

\section{Introduction}

The {\it Abelian sandpile model} was firstly studied by Bak, Tang and Wiesenfeld \cite{Bak87,Bak88} on integer grid graphs.
It was the first example of a {\it self-organized critical system}, which attempts to explain the occurrence of power laws in many natural phenomena ~\cite{Bak96} ranging on different fields like geophysics~\cite{geoph}, optimization ~\cite{optimi1,optimi2}, economics~\cite{Biggs99} and neuroscience~\cite{brain}.
An exposure to self-organized-critically is provided in~\cite{Bak96}, and one for the sandpile model can be found in~\cite{WhatIs}.
An excellent reference on the theory of sandpiles is the book of Klivans~{\cite{Klivans}, which we follow closely.

%\begin{figure}[h!]
%\begin{center}
%\begin{tabular}{c@{\extracolsep{2cm}}c}
%\includegraphics[height=5cm]{sandpilestable.png} & \includegraphics[height=5cm]{sandpileavalanche.png}\\
%\end{tabular}
%\end{center}
%\label{fig:avalanche}
%\caption{An stable configuration of the cone of a grid is shown with an appropiated gray-scale depicting the number of grains of sand at each site. The figure on the right highlights the vertices toppled during an avalanche.}
%\end{figure}

In the graph case, the dynamics of the Abelian sandpile model~\cite{Dhar90} begins with a graph with a special vertex, called {\it sink}.
We will simply use the term graph to mean a finite connected multidigraph without loops and with a global sink.
A global sink is a vertex such that there exists a directed path from every non-sink vertex to the sink.
Note that any vertex is a global sink in a connected simple graph.
Let $G = (V,E)$ be a graph with $V$ its vertex set, $E$ its edge set and $q\in V$ be a \emph{global sink} of $G$.
For simplicity, $\widetilde{V}$ will denote the set of non-sink vertices.
%When the context allows us, we will simply use $\widetilde{V}$ instead of $\widetilde{V}(G)$.
We denote by $\mathbb{N}$ the set of non-negative integers.
In the sandpile model, a configuration on $(G,q)$ is a vector ${\bf c}\in \mathbb{N}^{\widetilde{V}}$, in which the entry ${\bf c}_v$ is associated with the number of {\it grains of sand or chips} placed on vertex $v$.
A non-sink vertex $v$ is called \textit{stable} if ${\bf c}_v$  is less than its {\it degree} $d_G(v)$, and {\it unstable}, otherwise.
Moreover, a configuration is called \textit{stable} if every non-sink vertex is stable.
%Then, there exist $\prod_{v\in \widetilde{V}} d_G(v)$ stable configurations.
The {\it toppling rule} in the dynamics consists on selecting an unstable vertex $u$ and moving $d_G(u)$ grains of sand from $u$ to its neighbors, in which each neighbor $v$ receives $m_{(u,v)}$ grains of sand, where $m_{(u,v)}$ denotes the number of edges going from $u$ to $v$.

The \textit{Laplacian matrix} $\Delta(G)$ of a graph $G$ is the matrix whose $(u,v)$-entry is defined by
\[
\Delta(G)_{u,v}=
\begin{cases}
d_G(u) & \text{ if } u=v,\\
-m_{(u,v)} & \text{ otherwise.}
\end{cases}
\]
The {\it reduced Laplacian matrix} $\Delta(G)_q$ is the matrix defined as the Laplacian matrix where the column and row associated with the sink are removed.
For simplicity, we will use $\Delta_q$ to denote the reduced Laplacian.
Then, toppling vertex $v_i$ in the configuration ${\bf c}$ corresponds to the subtraction the $i$-{\it th} row to ${\bf c}$, in other words, ${\bf c}-{\bf e}_i\Delta_q$.
%\footnote{It is not necessary to use the transpose of the Laplacian since it is symmetric and we choose not to use the transpose for the sake of notational simplicity.}
In this setting the sink never topples, it only collects all grains of sand getting out of the dynamics of the model.
This model of configurations on $G$ together with the toppling rule is denoted by $ASM(G,q)$.

Starting with any unstable configuration and toppling unstable vertices repeatedly, we will always obtain a stable configuration after a finite sequence of topplings, see \cite[Lemma 2.4]{Holroyd}.
%However, in the digraph case, this is not always true, see~\cite[Example 2.1]{Holroyd} for an example of a digraph without global sink and a configuration that does not stabilizes.
It is important to know that the stabilization of an unstable configuration is unique, see~\cite[Theorem 2.1]{Meester}.
The stable configuration obtained from ${\bf c}$ will be denoted by $s({\bf c})$.
An example of stabilization of a configuration is given in Figure~\ref{fig:C_5}.
The finite sequence of topplings required to get the stable configuration is known as {\it avalanche}, and its {\it size} is the number of topplings.
% see Figure~\ref{fig:avalanche}.
Then, $s({\bf c})={\bf c}-\beta\Delta_q$ for some unique $\beta\in \mathbb{N}^{\widetilde{V}}$.
\begin{center}
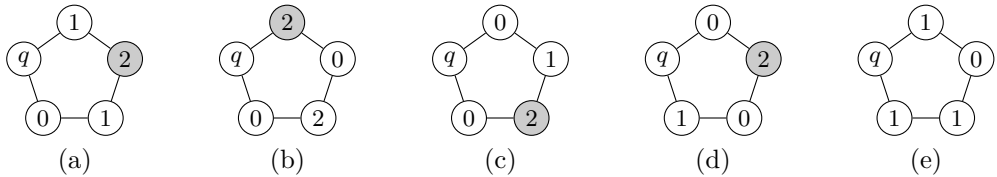
\begin{figure}[h]
  \begin{center}
	\begin{tabular}{c@{\extracolsep{1cm}}c@{\extracolsep{1cm}}c@{\extracolsep{1cm}}c@{\extracolsep{1cm}}c}
	\begin{tikzpicture}[scale=0.7] 
		\tikzstyle{every node}=[circle, inner sep=2pt, minimum width=5pt] 
		\path (18:1) node[draw, fill=black!20] (v1) {\small $2$};
    		\path (90:1)  node[draw] (v2) {\small $1$};
    		\path (162:1) node[draw] (v3) {\small $q$};
    		\path (234:1) node[draw] (v4) {\small $0$};
    		\path (306:1) node[draw] (v5) {\small $1$};
% 		\path (18:1.7) node {\small$v_3$};
%     		\path (90:1.7)  node {\small$v_4$};
%     		\path (162:1.7) node {\small$s$};
%     		\path (234:1.7) node {\small$v_1$};
%     		\path (306:1.7) node {\small$v_2$};
    		\draw (v1) -- (v2) -- (v3) -- (v4) -- (v5) -- (v1);
	\end{tikzpicture}
		&
	\begin{tikzpicture}[scale=0.7] 
		\tikzstyle{every node}=[circle, inner sep=2pt, minimum width=5pt] 
		\path (18:1) node[draw] (v1) {\small $0$};
    		\path (90:1)  node[draw, fill=black!20] (v2) {\small $2$};
    		\path (162:1) node[draw] (v3) {\small $q$};
    		\path (234:1) node[draw] (v4) {\small $0$};
    		\path (306:1) node[draw] (v5) {\small $2$};
% 		\path (18:1.7) node {\small$v_3$};
%     		\path (90:1.7)  node {\small$v_4$};
%     		\path (162:1.7) node {\small$s$};
%     		\path (234:1.7) node {\small$v_1$};
%     		\path (306:1.7) node {\small$v_2$};
    		\draw (v1) -- (v2) -- (v3) -- (v4) -- (v5) -- (v1);
	\end{tikzpicture}
		&
	\begin{tikzpicture}[scale=0.7] 
		\tikzstyle{every node}=[circle, inner sep=2pt, minimum width=5pt] 
		\path (18:1) node[draw] (v1) {\small $1$};
    		\path (90:1)  node[draw] (v2) {\small $0$};
    		\path (162:1) node[draw] (v3) {\small $q$};
    		\path (234:1) node[draw] (v4) {\small $0$};
    		\path (306:1) node[draw, fill=black!20] (v5) {\small $2$};
% 		\path (18:1.7) node {\small$v_3$};
%     		\path (90:1.7)  node {\small$v_4$};
%     		\path (162:1.7) node {\small$s$};
%     		\path (234:1.7) node {\small$v_1$};
%     		\path (306:1.7) node {\small$v_2$};
    		\draw (v1) -- (v2) -- (v3) -- (v4) -- (v5) -- (v1);
	\end{tikzpicture}
  &
	\begin{tikzpicture}[scale=0.7] 
		\tikzstyle{every node}=[circle, inner sep=2pt, minimum width=5pt] 
		\path (18:1) node[draw, fill=black!20] (v1) {\small $2$};
    		\path (90:1)  node[draw] (v2) {\small $0$};
    		\path (162:1) node[draw] (v3) {\small $q$};
    		\path (234:1) node[draw] (v4) {\small $1$};
    		\path (306:1) node[draw] (v5) {\small $0$};
% 		\path (18:1.7) node {\small$v_3$};
%     		\path (90:1.7)  node {\small$v_4$};
%     		\path (162:1.7) node {\small$s$};
%     		\path (234:1.7) node {\small$v_1$};
%     		\path (306:1.7) node {\small$v_2$};
    		\draw (v1) -- (v2) -- (v3) -- (v4) -- (v5) -- (v1);
	\end{tikzpicture}
  &
	\begin{tikzpicture}[scale=0.7] 
		\tikzstyle{every node}=[circle, inner sep=2pt, minimum width=5pt] 
		\path (18:1) node[draw] (v1) {\small $0$};
    		\path (90:1)  node[draw] (v2) {\small $1$};
    		\path (162:1) node[draw] (v3) {\small $q$};
    		\path (234:1) node[draw] (v4) {\small $1$};
    		\path (306:1) node[draw] (v5) {\small $1$};
% 		\path (18:1.7) node {\small$v_3$};
%     		\path (90:1.7)  node {\small$v_4$};
%     		\path (162:1.7) node {\small$s$};
%     		\path (234:1.7) node {\small$v_1$};
%     		\path (306:1.7) node {\small$v_2$};
    		\draw (v1) -- (v2) -- (v3) -- (v4) -- (v5) -- (v1);
	\end{tikzpicture}\\
  (a) & (b) & (c) & (d) & (e)
	\end{tabular}
  \end{center}
\caption{Starting in (a) with the unstable configuration ${\bf u}=(1,2,1,0)$ on the cycle graph with 5 vertices, the stable configuration ${\bf v}=(1,0,1,1)$ is reached in (e) after an avalanche of size 4. At each step the vertex to be toppled is highlighted. Note that at step $(b)$ we could select the other vertex with 2 grains to be toppled, if we continue the avalanche by toppling this vertex, then the stable configuration obtained will be the same.}
\label{fig:C_5}
\end{figure}
\end{center}

Let $\mathrm{deg}_{(G,q)}$ denote the vector $\left(\mathrm{deg}_{G}(u)\right)_{u\in \widetilde{V}}$.
A configuration ${\bf c}$ is \textit{recurrent or critical} if there exists a configuration ${\bf d}\geq \mathrm{deg}_{(G,q)}$ such that ${\bf c}=s({\bf d})$.
The sum of two configurations ${\bf c}$ and ${\bf d}$ is performed entry by entry, that is, $({\bf c}+{\bf d})_u={\bf c}_u+{\bf d}_u$ for all $u\in \widetilde{V}$.
Moreover, let 
\[
{\bf c}\oplus {\bf d}:=s({\bf c}+{\bf d}).
\]
Recurrent configurations play a central role in the dynamics of the ASM, and together with the $\oplus$ operation they form an Abelian group known as \textit{sandpile group}, which is denoted by $SP(G,q)$.% of $G$, denoted by $SP(G,s)$, is the set of recurrent configurations with $\oplus$ as binary operation.

\begin{Theorem}\cite[Corollary 2.16]{Holroyd}.
Let $G$ be a graph with global sink $q$, then $SP(G,q)$ is an Abelian group.
\end{Theorem}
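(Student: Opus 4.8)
\emph{Proof sketch.} The plan is to check that $SP(G,q)$ --- the set $\mathcal{R}\subseteq\mathbb{N}^{\widetilde{V}}$ of recurrent configurations together with $\oplus$ --- obeys the Abelian group axioms. Commutativity, associativity and closure will follow cheaply from the uniqueness of stabilization, while the identity and the inverses carry the content. The key preliminary is the \emph{Abelian property} of stabilization: for all ${\bf a},{\bf b}\in\mathbb{N}^{\widetilde{V}}$,
\[
s({\bf a}+{\bf b})=s\bigl(s({\bf a})+{\bf b}\bigr).
\]
This is immediate from uniqueness of the stable configuration (recalled above): one legal toppling sequence reducing ${\bf a}+{\bf b}$ to a stable configuration is obtained by first carrying out the topplings that stabilize ${\bf a}$ --- still legal, as ${\bf b}\ge{\bf 0}$ only adds chips --- reaching $s({\bf a})+{\bf b}$, and then continuing; uniqueness forces the outcome to be $s({\bf a}+{\bf b})$.

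Granting this, commutativity of $\oplus$ is trivial from ${\bf c}+{\bf d}={\bf d}+{\bf c}$, and two applications of the Abelian property yield
\[
({\bf c}\oplus{\bf d})\oplus{\bf e}=s\bigl({\bf c}+{\bf d}+{\bf e}\bigr)={\bf c}\oplus({\bf d}\oplus{\bf e}),
\]
so $\oplus$ is associative. For closure, write ${\bf c}=s({\bf c}')$ with ${\bf c}'\ge\mathrm{deg}_{(G,q)}$ and take ${\bf d}\in\mathcal{R}$ (hence stable, so ${\bf d}\ge{\bf 0}$); then ${\bf c}'+{\bf d}\ge\mathrm{deg}_{(G,q)}$, so $s({\bf c}'+{\bf d})\in\mathcal{R}$, while $s({\bf c}'+{\bf d})=s(s({\bf c}')+{\bf d})=s({\bf c}+{\bf d})={\bf c}\oplus{\bf d}$. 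Thus $(\mathcal{R},\oplus)$ is a finite commutative semigroup, and it remains to produce an identity and inverses. For this I would invoke the standard fact that a finite commutative semigroup satisfying the cancellation law is a group: then for each ${\bf a}\in\mathcal{R}$ the translation ${\bf x}\mapsto{\bf a}\oplus{\bf x}$ is injective, hence a bijection of the finite set $\mathcal{R}$; a point fixed by such a translation is a two-sided identity, and surjectivity produces inverses. So the theorem reduces to the cancellation law: ${\bf a}\oplus{\bf c}={\bf b}\oplus{\bf c}$ with ${\bf a},{\bf b},{\bf c}\in\mathcal{R}$ implies ${\bf a}={\bf b}$.

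A toppling changes a configuration by a vector ${\bf e}_i\Delta_q$, so $s({\bf x})\equiv{\bf x}\pmod{\mathbb{Z}^{\widetilde{V}}\Delta_q}$ for every ${\bf x}$, and therefore ${\bf a}\oplus{\bf c}={\bf b}\oplus{\bf c}$ already forces ${\bf a}-{\bf b}\in\mathbb{Z}^{\widetilde{V}}\Delta_q$. Hence the cancellation law --- and with it the theorem --- is equivalent to: \emph{two recurrent configurations that are congruent modulo $\mathbb{Z}^{\widetilde{V}}\Delta_q$ coincide}. Together with the (easier) complementary fact that every class modulo $\mathbb{Z}^{\widetilde{V}}\Delta_q$ contains a recurrent configuration --- since $\Delta_q$ is nonsingular over $\mathbb{Q}$, the lattice $\mathbb{Z}^{\widetilde{V}}\Delta_q$ contains coordinatewise arbitrarily large vectors, so adding one to a representative lands in the region $\ge\mathrm{deg}_{(G,q)}$, and stabilizing gives a recurrent representative of the same class --- this would in fact identify $SP(G,q)$ with the cokernel $\mathbb{Z}^{\widetilde{V}}/\mathbb{Z}^{\widetilde{V}}\Delta_q$, obviously a finite Abelian group, by a map sending $\oplus$ to ordinary addition.

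The single step with real content, and the one I expect to be the main obstacle, is the uniqueness of the recurrent representative in a residue class. Here I would use Dhar's burning criterion: a stable configuration ${\bf c}$ is recurrent if and only if, after adding to each vertex $v$ the number of edges from $q$ to $v$ and stabilizing, one recovers ${\bf c}$ with every non-sink vertex toppling exactly once. Given recurrent ${\bf a}$ and ${\bf b}$ with ${\bf a}={\bf b}+\eta\Delta_q$, one compares the burning run of ${\bf a}$ with the burning run of ${\bf b}$ preceded by the $\eta$-topplings, using the least-action characterization of stabilization and the ensuing monotonicity of odometers, to force both odometers to be the all-ones vector and hence $\eta={\bf 0}$. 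Proving Dhar's criterion itself (equivalently, the odometer-monotonicity statements it relies on) is the technical heart; once it is available, the group structure of $SP(G,q)$ follows and the rest is routine bookkeeping.
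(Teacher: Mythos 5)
The paper does not prove this statement at all --- it is quoted from \cite{Holroyd} (Corollary 2.16), and the one genuinely hard ingredient, the uniqueness of the recurrent representative in each firing-equivalence class, is likewise only cited (as \cite[Theorem 2.6.6]{Klivans}). So there is no in-paper argument to compare against; what you have written is the standard proof architecture, and it is sound as far as it goes. The Abelian property $s(\mathbf{a}+\mathbf{b})=s(s(\mathbf{a})+\mathbf{b})$, commutativity, associativity, and closure are all handled correctly, and the reduction of the group axioms to cancellation via the finite-cancellative-commutative-semigroup lemma is a clean and legitimate shortcut (Holroyd et al.\ instead organize the proof around showing that the addition action of $\mathbb{Z}^{\widetilde V}/\mathbb{Z}^{\widetilde V}\Delta_q$ on the set of recurrent configurations is free and transitive, which yields the isomorphism with the cokernel directly rather than through the semigroup lemma --- the two routes cost about the same). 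The one place where your write-up is genuinely thin is exactly the step you flag yourself: the claim that two recurrent configurations congruent modulo $\mathbb{Z}^{\widetilde V}\Delta_q$ coincide. Your sketch (``compare the burning run of $\mathbf{a}$ with the burning run of $\mathbf{b}$ preceded by the $\eta$-topplings \dots to force both odometers to be the all-ones vector'') names the right tools but does not yet constitute an argument: $\eta$ may have entries of both signs, so one cannot simply prepend ``$\eta$-topplings'' as a legal toppling sequence, and making the odometer comparison rigorous requires either proving Dhar's burning criterion in full or using a counting argument (existence of a recurrent representative in every class plus the bound $|\mathcal{R}|\le\det\Delta_q$). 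As a proof sketch your proposal is acceptable and matches the standard literature; as a complete proof it still owes the reader that lemma, which is precisely the debt the paper also discharges by citation.
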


The sandpile group $SP(G,q)$ is isomorphic to the quotient of $\mathbb{Z}^{\widetilde{V}}$ modulo the $\mathbb{Z}$-module generated by the rows of the reduced Laplacian matrix of $G$.
Thus two vectors ${\bf c}$ and ${\bf d}$ are (firing) equivalent in $\mathbb{Z}^{\widetilde{V}}$, denoted by ${\bf c}\sim{\bf d}$, if there exists a vector ${\bf z}\in \mathbb{Z}^{\widetilde{V}}$} such that ${\bf c} ={\bf d} + {\bf z}\Delta_q$.
The equivalence class of an element ${\bf c}$ is denoted by $[{\bf c}]$.
Moreover, there exists an unique recurrent configuration for every equivalence graph, see~\cite[Theorem 2.6.6]{Klivans}.
Several properties might be clearer if we consider recurrent configurations as representatives of equivalence classes of $SP(G,q)$.
Therefore, by the fundamental theorem of finitely generated Abelian groups, 
\[
SP(G,q)\cong \mathbb{Z}_{f_1}\oplus\mathbb{Z}_{f_2}\oplus\cdots\oplus\mathbb{Z}_{f_{n-1}},
\] 
where $f_i>0$ and $f_i|f_{j}$  for all $i<j$.
The algebraic structure of the sandpile group can be computed using the Smith normal form of the reduced Laplacian matrix. 
%In fact, there is a polynomial algorithm that computes it, see~\cite{KB}.
%It follows from Kirchhoff's  matrix-tree theorem \cite{Biggs93} that the order of the sandpile group of a graph $G$ is equal to the number of spanning trees of $G$.
%A finer relation was found by Merino~\cite{Merino97}, %and conjectured by Biggs,
%where the coefficients of a one variable specialization of the Tutte polynomial counts the recurrent configurations with certain number of grains.
%
%The sandpile group have been studied under different names depending on the context, these names range from Jacobian group, Picard group, critical group, to dollar game, see for instance~\cite{Biggs97,Biggs99,Lorenzini89,Lorenzini91}.
%The sandpile group has been determined for some families of graphs, see the introduction of~\cite{AlfaroValencia2012} for pointers to some of this literature.

It is important to note that the algebraic structure of the sandpile group does not depend on the sink vertex. 
However, the set of recurrent configurations of $G$ do depends on the sink vertex.
%We are interested in giving an explicit description of the recurrent configurations of a graph since they show \cite{BorgneIdentity} nice fractal structure and give combinatorial information of the graph.
In general, it is easier to describe the abstract structure of the sandpile group than to
give an explicit description of recurrent configurations and their generated subgroups.

The set of recurrent configurations and their generated subgroups has been described only for a few family of graphs.
For instance, in~\cite{BorgneIdentity},~\cite{Identity}, and~\cite{DartoisIdentity}
is given a partial characterization of the recurrent configuration that plays the role of the identity for the grid graph.
And it is still open to prove that the identity element of the grid graph has a large square of constant value centered at the origin \cite[Conjecture 5.7.3]{Klivans}.
In a parallel way, the sandpile dynamics can be generalized to non-singular $M$-matrices instead of Laplacian matrices, see~\cite{GuzmanKlivans}.

In this article we give an integer linear program model whose optimal solution computes the recurrent configuration equivalent to a given configuration and another one that computes its order.
This model is simpler and useful on computing the identity and the generators of the sandpile group.
For simplicity, we give the results in terms of the Laplacian matrix, but it can be extended to $M$-matrices.

%===================================%
%=============Introduction=============%
%===================================%
\section{Recurrent configurations as optimal solutions of a integer linear program}

It is not surprising that recurrent configurations can be seen as solutions of a variety of optimization problems.
For instance in~\cite[Theorem 2.36]{Alfaro}, the unique recurrent configuration that belongs to a class of a stable configuration was characterized as the optimal solution of an integer linear program.

A configuration ${\bf c}$ is superstable if and only if $\sigma_{max}-{\bf c}$ is recurrent, where $\sigma_{max}=\mathrm{deg}_{(G,q)}-{\bf 1}$ is the maximum stable configuration.
In~\cite{Potential}, superstable configurations were characterized as the configurations that minimize the energy.
The {\it energy} $E({\bf c})$ of ${\bf c}$ is given by $\|\Delta_q^{-1}{\bf c}\|_2^2$, where $\|{\bf c}\|_2^2={\bf c}\cdot {\bf c}$.
Thus, superstable configurations are the solutions of the
{\it energy minimization problem:}
Given a configuration ${\bf c}$, determine a configuration of lowest energy that is firing equivalent to ${\bf c}$.
That is, a solution to:

\begin{eqnarray}
{\sf argmin} & E({\bf c})\nonumber\\
 & {\bf c}\sim{\bf d}\\
 & {\bf d}\geq {\bf 0}\nonumber
\end{eqnarray}

%In \cite{GuzmanKlivans} the theory was developed in a broader context of $M$-matrices.
A nice presentation of the topic can also be found in \cite[Section 2.6.4]{Klivans}.
%Therefore, superstable configurations are optimal solutions of linear integer and quadratic integer programs.
In~\cite{GuzmanKlivans} the idea of energy minimizers was formulated for a wide variety of objective functions under linear restrictions.

Over all optimization problems that characterize the recurrent configurations as its solution perhaps the simplest and powerful is the \emph{Least Action Principle}.

\begin{Lemma}[Least Action Principle]\cite[Proposition 2.6.18]{Klivans} \label{algrec3}
Let $G$ be a graph with global sink $q$, ${\bf c}$ be a configuration of $(G,q)$, ${\bf d} = s({\bf c})$ and ${\bf z}\in \mathbb{N}^{\widetilde{V}}$ such that 
\[
{\bf d} = {\bf c} - {\bf z}\Delta_q.
\]
If ${\bf f} = {\bf c} - {\bf w}\Delta_q$ is stable for some ${\bf w}\in \mathbb{Z}^{\widetilde{V}}$, then ${\bf z\leq w}$.
\end{Lemma}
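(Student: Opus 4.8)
The plan is to run the legal stabilization of ${\bf c}$ one toppling at a time and show that the running toppling count never overtakes ${\bf w}$ in any coordinate; pushing this all the way to the end of the avalanche then yields ${\bf z}\le{\bf w}$. The argument uses that ${\bf w}\ge{\bf 0}$ (this is the form of the principle in \cite[Proposition 2.6.18]{Klivans}, and it is already needed for the base case, so I would take it as part of the hypothesis).

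Fix a legal avalanche $u_1,u_2,\dots,u_k$ that stabilizes ${\bf c}$, which exists and is finite by \cite[Lemma 2.4]{Holroyd}; set ${\bf z}^{(\ell)}:=\sum_{i=1}^{\ell}{\bf e}_{u_i}$ and ${\bf c}^{(\ell)}:={\bf c}-{\bf z}^{(\ell)}\Delta_q$, so that ${\bf z}^{(0)}={\bf 0}$, ${\bf z}^{(k)}={\bf z}$, ${\bf c}^{(k)}={\bf d}$, and $u_{\ell+1}$ is unstable in ${\bf c}^{(\ell)}$ for each $\ell<k$. I would prove ${\bf z}^{(\ell)}\le{\bf w}$ by induction on $\ell$. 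The case $\ell=0$ is just ${\bf 0}\le{\bf w}$. For the inductive step put $v:=u_{\ell+1}$; since ${\bf z}^{(\ell+1)}$ agrees with ${\bf z}^{(\ell)}\le{\bf w}$ in every coordinate except that its $v$-entry is one larger, it suffices to show ${\bf z}^{(\ell)}_v<{\bf w}_v$.

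Suppose instead ${\bf z}^{(\ell)}_v={\bf w}_v$ (equality, using the inductive hypothesis). Comparing the $v$-th coordinates of ${\bf c}^{(\ell)}={\bf c}-{\bf z}^{(\ell)}\Delta_q$ and ${\bf f}={\bf c}-{\bf w}\Delta_q$ gives
\[
{\bf f}_v-{\bf c}^{(\ell)}_v=\sum_{u\in\widetilde{V}}\bigl({\bf z}^{(\ell)}_u-{\bf w}_u\bigr)(\Delta_q)_{u,v}.
\]
The $u=v$ summand vanishes by the assumption ${\bf z}^{(\ell)}_v={\bf w}_v$, and for $u\ne v$ the factor ${\bf z}^{(\ell)}_u-{\bf w}_u$ is $\le 0$ by the inductive hypothesis while $(\Delta_q)_{u,v}=-m_{(u,v)}\le 0$, so each summand is $\ge 0$. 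Hence ${\bf f}_v\ge{\bf c}^{(\ell)}_v\ge d_G(v)$, the last inequality because $v$ is unstable in ${\bf c}^{(\ell)}$; this contradicts the stability of ${\bf f}$. Therefore ${\bf z}^{(\ell)}_v<{\bf w}_v$, the induction goes through, and taking $\ell=k$ gives ${\bf z}={\bf z}^{(k)}\le{\bf w}$.

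The substantive point — essentially the only place any care is needed — is the sign bookkeeping in the displayed identity: it combines the nonpositivity of the off-diagonal entries of $\Delta_q$ with the inductive hypothesis ${\bf z}^{(\ell)}\le{\bf w}$ to force the "slack" ${\bf f}_v-{\bf c}^{(\ell)}_v$ to be nonnegative exactly at the vertex about to be toppled. Working with a \emph{legal} avalanche, rather than an arbitrary toppling sequence, is what makes $u_{\ell+1}$ unstable in ${\bf c}^{(\ell)}$, which is precisely the inequality ${\bf c}^{(\ell)}_v\ge d_G(v)$ invoked at the end; everything else is routine.
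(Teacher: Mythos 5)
Your argument is correct, and it is the standard proof of the Least Action Principle (essentially the one behind \cite[Proposition 2.6.18]{Klivans}); the paper itself gives no proof of this lemma, only the citation, so there is no in-paper argument to compare against step by step. The one substantive issue is exactly the one you flag: the hypothesis ${\bf w}\geq {\bf 0}$. It is not merely a convenience for your base case --- the statement as printed, with ${\bf w}\in\mathbb{Z}^{\widetilde{V}}$, is false. For instance, on the cycle of Figure~\ref{fig:C_5} take ${\bf c}={\bf 0}$, so ${\bf z}={\bf 0}$, and ${\bf w}=-{\bf 1}$; then ${\bf f}={\bf c}-{\bf w}\Delta_q={\bf 1}\Delta_q=(1,0,0,1)$ is a stable configuration, yet ${\bf z}\not\leq{\bf w}$. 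So restoring ${\bf w}\in\mathbb{N}^{\widetilde{V}}$ (as in Klivans) is a correction to the statement, not an optional strengthening of the hypotheses, and you are right to insist on it. Granting that, your induction along a legal avalanche is sound: the displayed identity for ${\bf f}_v-{\bf c}^{(\ell)}_v$, the vanishing of the diagonal term under the assumption ${\bf z}^{(\ell)}_v={\bf w}_v$, the nonpositivity of the off-diagonal entries of $\Delta_q$, and the instability of $v=u_{\ell+1}$ in ${\bf c}^{(\ell)}$ combine to give ${\bf f}_v\geq d_G(v)$, contradicting stability; taking $\ell=k$ and using the nonsingularity of $\Delta_q$ to identify the avalanche's total firing vector with the ${\bf z}$ of the statement yields ${\bf z}\leq{\bf w}$.
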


From the Least Action Principle we get the following characterization of the stabilization of a configuration using an integer linear program.

\begin{Corollary}\label{integer2}
Let $G$ be a graph with global sink $q$, ${\bf c}$ be a configuration of $(G,q)$ and $\sigma_{max}$ be the maximum stable configuration.
If ${\bf x}^*$ is an optimal solution of the integer linear program
\begin{eqnarray}\label{model1}
\text{\rm maximize }   & & {\bf1}\cdot{\bf x}^t \nonumber\\
\text{\rm subject to } & & {\bf 0}\leq {\bf c}- {\bf x}\Delta_q \leq \sigma_{max},\\
	                       & & {\bf x}\in \mathbb{Z}^{\widetilde{V}}, \nonumber
\end{eqnarray}
then ${\bf x}^*$ is unique and $s({\bf c})={\bf c}-{\bf x}^* \Delta_q$.
\end{Corollary}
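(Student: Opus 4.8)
The plan is to identify the stabilization firing vector as the unique optimum. Write $s({\bf c}) = {\bf c} - {\bf z}\Delta_q$ with ${\bf z}\in\mathbb{N}^{\widetilde{V}}$, as in the statement of the Least Action Principle (Lemma~\ref{algrec3}). I will show that ${\bf z}$ is feasible for the program \eqref{model1}, that every feasible ${\bf x}$ satisfies ${\bf x}\leq{\bf z}$ componentwise, and that this inequality is strict in at least one coordinate whenever ${\bf x}\neq{\bf z}$. Together these give ${\bf 1}\cdot{\bf x}^t\leq{\bf 1}\cdot{\bf z}^t$ with equality only at ${\bf x}={\bf z}$, so that ${\bf z}$ is the unique maximizer and ${\bf c}-{\bf z}\Delta_q=s({\bf c})$, as claimed.

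Feasibility is the routine part. Since stabilization only fires vertices, ${\bf z}\in\mathbb{N}^{\widetilde{V}}\subseteq\mathbb{Z}^{\widetilde{V}}$. The configuration ${\bf c}-{\bf z}\Delta_q=s({\bf c})$ is a stable configuration, hence each of its entries is nonnegative and strictly below the corresponding degree; equivalently ${\bf 0}\leq{\bf c}-{\bf z}\Delta_q\leq\sigma_{max}$, recalling $\sigma_{max}=\mathrm{deg}_{(G,q)}-{\bf 1}$. Thus ${\bf z}$ lies in the feasible region of \eqref{model1}, and in particular the optimum is attained.

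The heart of the argument is the upper bound. Let ${\bf x}$ be any feasible point and set ${\bf f}={\bf c}-{\bf x}\Delta_q$, which by the constraints is a stable configuration. Comparing ${\bf f}$ with $s({\bf c})={\bf c}-{\bf z}\Delta_q$, I compute
\[
s({\bf c}) = {\bf f} - ({\bf z}-{\bf x})\Delta_q .
\]
So $s({\bf c})$ is obtained from the already stable configuration ${\bf f}$ by the firing vector ${\bf z}-{\bf x}$, and $s({\bf c})$ is itself stable. Now I apply the Least Action Principle to ${\bf f}$: because ${\bf f}$ is stable its own stabilization firing vector is ${\bf 0}$, so any integer vector that turns ${\bf f}$ into a stable configuration must dominate ${\bf 0}$. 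Applied to the vector ${\bf z}-{\bf x}$ this yields ${\bf 0}\leq{\bf z}-{\bf x}$, i.e.\ ${\bf x}\leq{\bf z}$. Consequently ${\bf 1}\cdot{\bf x}^t\leq{\bf 1}\cdot{\bf z}^t$; and if ${\bf x}\neq{\bf z}$ then some coordinate satisfies ${\bf x}_v<{\bf z}_v$ while all others satisfy ${\bf x}_u\leq{\bf z}_u$, forcing a strict drop in the objective. Hence ${\bf z}$ is the unique optimal solution.

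I expect the main obstacle to be precisely the invocation of Lemma~\ref{algrec3} on the auxiliary configuration ${\bf f}$ for \emph{every} integer feasible ${\bf x}$, rather than on ${\bf c}$ itself. One must check that ${\bf z}-{\bf x}$ is a legitimate firing vector to which the Least Action Principle applies, so that the principle delivers the inequality ${\bf z}-{\bf x}\geq{\bf 0}$ in the direction needed for a \emph{maximum}; this is the step where the nonnegativity of the firing vectors entering the principle must be handled with care, and it is where the uniqueness of ${\bf x}^*$ ultimately comes from. Once this componentwise domination ${\bf x}\leq{\bf z}$ is secured, the feasibility of ${\bf z}$ and the strict monotonicity of ${\bf 1}\cdot(\,\cdot\,)^t$ close the proof immediately.
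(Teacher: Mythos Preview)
Your route differs from the paper's. The paper applies Lemma~\ref{algrec3} directly to ${\bf c}$ with ${\bf w}={\bf x}^*$, obtaining ${\bf z}\le{\bf x}^*$, and then combines this with feasibility of ${\bf z}$ and optimality of ${\bf x}^*$ to force ${\bf z}={\bf x}^*$. You instead apply the principle to the auxiliary stable configuration ${\bf f}={\bf c}-{\bf x}\Delta_q$ with firing vector ${\bf z}-{\bf x}$, aiming for the \emph{reverse} inequality ${\bf x}\le{\bf z}$ for every feasible ${\bf x}$.

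The concern you flag in your last paragraph is a genuine gap, and it cannot be patched. Your invocation of Lemma~\ref{algrec3} on ${\bf f}$ uses a firing vector ${\bf w}={\bf z}-{\bf x}$ of unknown sign, and the Least Action Principle fails without the hypothesis ${\bf w}\ge{\bf 0}$: on $C_5$ take the stable ${\bf f}={\bf 0}$ (whose stabilization vector is ${\bf 0}$) and ${\bf w}=-(2,3,3,2)$; then ${\bf f}-{\bf w}\Delta_q=(1,1,1,1)$ is stable while ${\bf w}\not\ge{\bf 0}$. Correspondingly, the bound you want is simply false. For ${\bf c}=(2,2,2,2)$ one has $s({\bf c})=(1,1,1,1)$ and ${\bf z}=(2,3,3,2)$, yet ${\bf x}=(4,6,6,4)$ is feasible for \eqref{model1} (since ${\bf c}-{\bf x}\Delta_q={\bf 0}$) with ${\bf 1}\cdot{\bf x}=20>10={\bf 1}\cdot{\bf z}$; so ${\bf x}\le{\bf z}$ fails and ${\bf z}$ is not the maximizer. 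The Least Action Principle genuinely points the other way, giving ${\bf z}\le{\bf x}$ for every feasible ${\bf x}$, which is exactly the inequality the paper uses; together with feasibility of ${\bf z}$ this pins ${\bf z}$ down as the unique \emph{minimizer} of ${\bf 1}\cdot{\bf x}$, and the same example shows it is not the maximizer.
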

\begin{proof}
Let ${\bf f}={\bf c}- {\bf x}^*\Delta_q$ and $s({\bf c})={\bf c} - {\bf z}\Delta_q$.
Since ${\bf f}$ is stable, by the Least Action Principle ${\bf z}\leq {\bf x}^*$.
Moreover, since ${\bf x}^*$ is an optimal solution of~(\ref{model1}), then ${\bf z}= {\bf x}^*$ and therefore ${\bf x}^*$ is unique.
\end{proof}

Note that Corollary~\ref{integer2} applies to any configuration.
However, a special case is given when ${\bf c}\geq \mathrm{deg}_{(G,q)}$, for which we get recurrent configurations.
Dually, we have an equivalent principle, which we call the Maximal Action Principle.

\begin{Lemma}[Maximal Action Principle] \label{maximal}
Let $G$ be a graph with global sink $q$, ${\bf c}$ be a stable configuration of $(G,q)$, ${\bf d}$ be the recurrent configuration firing equivalent to ${\bf c}$ and ${\bf z}\in \mathbb{N}^{\widetilde{V}}$ such that 
\[
{\bf d} = {\bf c} + {\bf z}\Delta_q.
\]
If ${\bf f} = {\bf c} + {\bf w}\Delta_q$ is stable for some ${\bf w}\in \mathbb{Z}^{\widetilde{V}}$, then ${\bf w} \leq {\bf z}$.
\end{Lemma}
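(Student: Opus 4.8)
The plan is to reduce the Maximal Action Principle to the Least Action Principle (Lemma~\ref{algrec3}) by passing to complements. Recall from the discussion preceding the statement that a configuration ${\bf a}$ is superstable if and only if $\sigma_{max}-{\bf a}$ is recurrent, where $\sigma_{max}=\mathrm{deg}_{(G,q)}-{\bf 1}$; moreover superstable configurations are exactly the stabilizations arising in the Least Action Principle when one starts from a configuration that is firing-equivalent to a superstable one. So the idea is: given the stable configuration ${\bf c}$, its recurrent partner ${\bf d}$, and ${\bf z}\in\mathbb{N}^{\widetilde V}$ with ${\bf d}={\bf c}+{\bf z}\Delta_q$, set ${\bf c}' := \sigma_{max}-{\bf c}$ and ${\bf d}' := \sigma_{max}-{\bf d}$. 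Then ${\bf d}'$ is superstable (being the complement of the recurrent ${\bf d}$), and ${\bf c}'$ is firing-equivalent to ${\bf d}'$ since ${\bf d}' = \sigma_{max}-{\bf d} = \sigma_{max}-{\bf c}-{\bf z}\Delta_q = {\bf c}' - {\bf z}\Delta_q$.

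The key step is then to check that ${\bf d}' = s({\bf c}')$, i.e.\ that the superstable representative of the class $[{\bf c}']$ really is the stabilization of ${\bf c}'$. This follows because ${\bf c}'=\sigma_{max}-{\bf c}$ is stable (each coordinate of ${\bf c}$ lies in $[0,\mathrm{deg}_{(G,q)}-1]$, hence so does each coordinate of $\sigma_{max}-{\bf c}$), so $s({\bf c}')={\bf c}'$ unless ${\bf c}'$ is already its own superstable representative; but in general $s({\bf c}')$ is stable and firing-equivalent to ${\bf c}'$, and since there is a unique recurrent — dually, a unique superstable — configuration per class, and ${\bf d}'$ is the superstable one, we must have that the relevant witness is ${\bf z}$. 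Concretely: by Lemma~\ref{algrec3} applied to ${\bf c}'$, writing $s({\bf c}') = {\bf c}' - {\bf z}^{\star}\Delta_q$ with ${\bf z}^{\star}\in\mathbb{N}^{\widetilde V}$ minimal among all integer vectors producing a stable configuration, we get ${\bf z}^{\star}\le {\bf z}$ (since ${\bf c}' - {\bf z}\Delta_q = {\bf d}'$ is stable) and we get ${\bf z}^{\star}$ is minimal. One then argues $s({\bf c}')$ is superstable, hence equals ${\bf d}'$, hence ${\bf z}^{\star}={\bf z}$ by uniqueness.

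Now take any ${\bf w}\in\mathbb{Z}^{\widetilde V}$ with ${\bf f}={\bf c}+{\bf w}\Delta_q$ stable. Complementing, ${\bf f}' := \sigma_{max}-{\bf f} = {\bf c}'-{\bf w}\Delta_q$ is stable, and crucially ${\bf w}$ need not be non-negative, exactly as in the hypothesis of Lemma~\ref{algrec3}. Applying the Least Action Principle to ${\bf c}'$ with this ${\bf w}$ gives ${\bf z} = {\bf z}^{\star} \le {\bf w}$; negating both sides yields $-{\bf w}\le -{\bf z}$, i.e.\ the statement we want after rewriting — wait, that gives the wrong direction, so instead one applies the Least Action Principle in the form that the \emph{minimal} firing vector is $\le$ any other firing vector, obtaining ${\bf z}\le{\bf w}$ for the ${\bf c}'$ picture, which translates back through ${\bf f}={\bf c}+{\bf w}\Delta_q \iff {\bf f}'={\bf c}'-{\bf w}\Delta_q$ into ${\bf w}\le{\bf z}$ for the original ${\bf c}$ picture, as claimed. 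The main obstacle I anticipate is bookkeeping the sign flip correctly: the map ${\bf w}\mapsto -{\bf w}$ is what converts "adding ${\bf w}\Delta_q$" into "subtracting ${\bf w}\Delta_q$", and one must be careful that the minimality conclusion of Lemma~\ref{algrec3} ("${\bf z}\le{\bf w}$ for all stabilizing ${\bf w}$") becomes the maximality conclusion ("${\bf w}\le{\bf z}$") after this flip, and that non-negativity of ${\bf z}$ (needed to even state the result) is inherited from the Least Action Principle applied to the complemented data.
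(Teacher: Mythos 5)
Your reduction to the Least Action Principle is the right instinct, but the complementation ${\bf c}\mapsto\sigma_{max}-{\bf c}$ does not work, for two reasons. First, the pivotal claim $s({\bf c}')={\bf d}'$ is false: since ${\bf c}$ is stable, ${\bf c}'=\sigma_{max}-{\bf c}$ is also stable, so $s({\bf c}')={\bf c}'$ and the stabilizing firing vector is ${\bf z}^{\star}={\bf 0}$, not ${\bf z}$. Stabilization of a configuration does not produce the superstable representative of its class (superstables are \emph{not} computed by toppling), so "${\bf z}^{\star}={\bf z}$ by uniqueness" fails whenever ${\bf c}$ is not already recurrent. Second, and independently, the complementation does not reverse the sign of the firing vector: from ${\bf f}={\bf c}+{\bf w}\Delta_q$ you get ${\bf f}'={\bf c}'-{\bf w}\Delta_q$ with the \emph{same} ${\bf w}$, so the Least Action Principle can only give a conclusion of the form "(something) $\le{\bf w}$", a lower bound on ${\bf w}$, whereas the lemma asserts an upper bound ${\bf w}\le{\bf z}$. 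You noticed this ("that gives the wrong direction") but the proposed fix --- that translating back to the original picture flips the inequality --- is not justified and is in fact false, since no sign change occurs in the translation.

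The repair is to shift \emph{upward} rather than to complement. Since $\Delta_q$ is an $M$-matrix, there is ${\bf h}\in\mathbb{N}_+^{\widetilde V}$ with ${\bf h}\Delta_q\ge{\bf 1}$, so for $t$ large enough ${\bf c}'={\bf c}+t{\bf h}\Delta_q\ge\mathrm{deg}_{(G,q)}$. Now $s({\bf c}')$ \emph{is} the recurrent representative ${\bf d}$ by the definition of recurrence, and writing $s({\bf c}')={\bf c}'-{\bf z}'\Delta_q$ gives ${\bf z}=t{\bf h}-{\bf z}'$. The key point is that ${\bf f}={\bf c}+{\bf w}\Delta_q$ rewrites as ${\bf f}={\bf c}'-(t{\bf h}-{\bf w})\Delta_q$, so the firing vector in the shifted picture is $t{\bf h}-{\bf w}$, a \emph{decreasing} function of ${\bf w}$; the Least Action Principle then gives ${\bf z}'\le t{\bf h}-{\bf w}$, i.e.\ ${\bf w}\le t{\bf h}-{\bf z}'={\bf z}$. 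This is exactly where the minimality of the Least Action Principle turns into the maximality you need, and it is the step your complementation cannot supply.
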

\begin{proof}
Since $\Delta_q$ is a $M$-matrix there exists ${\bf h}\in \mathbb{N}_+^{\widetilde{V}}$ such that ${\bf h}\Delta_q\geq {\bf 1}$.
Thus, it is not difficult to check that there exists $t\in \mathbb{N}_+$ such that ${\bf c}'={\bf c}+t{\bf h}\Delta_q\geq \mathrm{deg}_{(G,q)}$.

Now, let ${\bf z}'\in \mathbb{N}^{\widetilde{V}}$ such that $s({\bf c}') = {\bf c}' - {\bf z}'\Delta_q$.
Since ${\bf c}'\in [{\bf c}]$, by the definition of recurrent configuration, 
\[
{\bf d}=s({\bf c}')={\bf c} + (t{\bf h}-{\bf z}')\Delta_q
\]
and ${\bf z}=t{\bf h}-{\bf z}'$.
Since ${\bf f} = {\bf c} + {\bf w}\Delta_q$ if and only if ${\bf f} = {\bf c}' - (t{\bf h}-{\bf w})\Delta_q$, then applying the Least Action Principle to ${\bf c}'$ we get that $t{\bf h}-{\bf z}={\bf z}'\leq t{\bf h}-{\bf w}$ and we get the result.
\end{proof}

In a similar way, we get a characterization of a recurrent configuration as a solution of an integer linear program using the Maximal Action Principle.

\begin{Theorem}\cite[Theorem 2.36]{Alfaro}\label{integer1}
Let $G$ be a graph with global sink $q$, ${\bf c}$ be a stable configuration of $(G,q)$.
If ${\bf x}^*$ is an optimal solution of the integer linear program 
\begin{eqnarray}\label{main:model}
\text{\rm maximize }   & & {\bf1}\cdot{\bf x} \nonumber\\
\text{\rm subject to } & & {\bf 0}\leq {\bf c}+{\bf x}\Delta_q \leq \sigma_{max},\\
	                       & & {\bf x}\in {\mathbb Z}^{\widetilde{V}}, \nonumber
\end{eqnarray}
then ${\bf x}^*$ is unique and ${\bf c} + {\bf x}^* \Delta_q \in SP(G,q)$.
\end{Theorem}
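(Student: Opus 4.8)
The plan is to imitate the proof of Corollary~\ref{integer2}, with the Least Action Principle replaced by its dual, the Maximal Action Principle (Lemma~\ref{maximal}). Let ${\bf d}$ be the unique recurrent configuration firing equivalent to ${\bf c}$, and fix ${\bf z}\in\mathbb{N}^{\widetilde{V}}$ with ${\bf d}={\bf c}+{\bf z}\Delta_q$, as provided by Lemma~\ref{maximal}.

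First I would observe that ${\bf x}={\bf z}$ is feasible for~(\ref{main:model}): a recurrent configuration is in particular stable, so ${\bf 0}\le{\bf d}\le\sigma_{max}$, which is precisely the constraint ${\bf 0}\le{\bf c}+{\bf z}\Delta_q\le\sigma_{max}$. Hence the feasible region is nonempty. The objective ${\bf 1}\cdot{\bf x}$ is also bounded above on the feasible set: from ${\bf c}+{\bf x}\Delta_q\le\sigma_{max}$ and the fact that $\Delta_q$ is a nonsingular $M$-matrix, so $\Delta_q^{-1}$ has nonnegative entries, we get ${\bf x}\le(\sigma_{max}-{\bf c})\Delta_q^{-1}$, and therefore ${\bf 1}\cdot{\bf x}$ is bounded above. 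Thus an optimal ${\bf x}^*$ exists and the statement is not vacuous.

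Now let ${\bf x}^*$ be any optimal solution and put ${\bf f}={\bf c}+{\bf x}^*\Delta_q$. Feasibility of ${\bf x}^*$ says exactly that ${\bf f}$ is an integer vector with ${\bf 0}\le{\bf f}\le\sigma_{max}=\mathrm{deg}_{(G,q)}-{\bf 1}$, i.e. ${\bf f}$ is a stable configuration of the form ${\bf c}+{\bf w}\Delta_q$ with ${\bf w}={\bf x}^*\in\mathbb{Z}^{\widetilde{V}}$. The Maximal Action Principle then gives ${\bf x}^*\le{\bf z}$ componentwise. On the other hand ${\bf x}={\bf z}$ is feasible, so optimality of ${\bf x}^*$ forces ${\bf 1}\cdot{\bf x}^*\ge{\bf 1}\cdot{\bf z}$. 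Combining the componentwise inequality ${\bf x}^*\le{\bf z}$ with ${\bf 1}\cdot{\bf x}^*\ge{\bf 1}\cdot{\bf z}$ yields ${\bf x}^*={\bf z}$. In particular ${\bf x}^*$ is unique and ${\bf c}+{\bf x}^*\Delta_q={\bf d}\in SP(G,q)$.

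I expect no serious obstacle: the content is essentially packaged in Lemma~\ref{maximal}, whose proof in turn reduces to the Least Action Principle applied to a sufficiently loaded configuration ${\bf c}'={\bf c}+t{\bf h}\Delta_q$. The only points needing a moment's care are (i) reading the box constraint ${\bf 0}\le{\bf c}+{\bf x}\Delta_q\le\sigma_{max}$ as the assertion that ${\bf c}+{\bf x}\Delta_q$ is a stable configuration firing equivalent to ${\bf c}$, so that Lemma~\ref{maximal} applies to every feasible point, and (ii) using that the maximum of ${\bf 1}\cdot{\bf x}$ is actually attained, which is what upgrades the inequality ${\bf x}^*\le{\bf z}$ to the equality ${\bf x}^*={\bf z}$ and hence yields uniqueness.
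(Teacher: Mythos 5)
Your proposal is correct and follows exactly the route the paper intends: the paper's proof is literally the one-line remark that one repeats the argument of Corollary~\ref{integer2} with the Maximal Action Principle (Lemma~\ref{maximal}) in place of the Least Action Principle, which is precisely what you carry out. Your additional checks (feasibility of ${\bf z}$, boundedness of the objective via the $M$-matrix property, and attainment of the maximum) are details the paper leaves implicit, and they are handled correctly.
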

\begin{proof}
It follows by similar arguments of those used in Corollary~\ref{integer2} but using the Maximal Action Principle instead the Least Action Principle.
%Let ${\bf x}^*$ be an optimal solution of the integer linear program $(1)$.
%Note ${\bf r}= {\bf x}^* \Delta_q + {\bf c}$ is an stable configuration, and $[{\bf c}]=[{\bf r}]$ in $SP(G,q)$.
%Suppose ${\bf r}$ is not a recurrent configuration and let ${\bf d}$ be the unique recurrent configuration in the class $[{\bf c}]$.
%By Lemma~\ref{algrec1}, there exists $\beta$ such that $s({\bf d}+\beta)={\bf d}$.
%Thus 
%\[
%{\bf d} = {\bf d}+\beta - {\bf z} \Delta_q,
%\]
%where ${\bf z} = \beta \Delta_q^{-1}$.
%Since ${\bf r}$ is stable, then
%\[
%{\bf r} = {\bf d}+\beta - {\bf w} \Delta_q,
%\]
% and by Lemma~\ref{algrec3}, ${\bf w\geq z}$.
%Therefore,
%\begin{eqnarray*}
%{\bf d} & = & ({\bf r} -\beta + {\bf w} \Delta_q) +\beta - {\bf z} \Delta_q\\
% &=& {\bf x}^* \Delta_q + c + ( {\bf w} - {\bf z} ) \Delta_q\\
%&=& ( {\bf x}^* \Delta_q + {\bf w} - {\bf z} ) + c.
%\end{eqnarray*}
%From which follows that ${\bf x}'={\bf x}^*+{\bf w}-{\bf z}$ is a feasible solution of the integer linear program $(1)$ 
%with ${\bf1\cdot x}^* < {\bf 1\cdot x}'$, since ${\bf w}\neq {\bf z}$.
%Contradicting the optimality of ${\bf x}^*$.
\end{proof}

Clearly applying Theorem~\ref{integer1} to ${\bf c} ={\bf 0}$ we can compute the identity of $SP(G,q)$.

\begin{Corollary}\label{identidad}
Let $G$ be a graph with global sink $q$.
If ${\bf x}^*$ is an optimal solution of the integer linear problem:
\begin{eqnarray}\label{identity:model}
\text{\rm maximize }   & & {\bf1\cdot x} \nonumber\\
\text{\rm subject to } & & {\bf 0}\leq {\bf x} \Delta_q \leq \sigma_{max},\\
	                       & & {\bf x}\in \mathbb{Z}^{\widetilde{V}}, \nonumber
\end{eqnarray}
then ${\bf x}^*\Delta_q$ is the identity in $SP(G,q)$.
\end{Corollary}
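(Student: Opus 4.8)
The plan is to obtain Corollary~\ref{identidad} as the specialization of Theorem~\ref{integer1} to the configuration $\mathbf{c}=\mathbf{0}$. First I would check that $\mathbf{0}$ is a legitimate stable configuration of $(G,q)$: since $q$ is a global sink, every non-sink vertex $v$ has a directed path to $q$, so $d_G(v)\geq 1>0=\mathbf{0}_v$, and hence $\mathbf{0}$ is stable and Theorem~\ref{integer1} applies. Substituting $\mathbf{c}=\mathbf{0}$ into the linear program~(\ref{main:model}) reproduces the program~(\ref{identity:model}) verbatim, including the objective $\mathbf{1}\cdot\mathbf{x}$ and the box constraint $\mathbf{0}\leq\mathbf{x}\Delta_q\leq\sigma_{max}$; therefore an optimal solution $\mathbf{x}^*$ of~(\ref{identity:model}) is exactly an optimal solution of the $\mathbf{c}=\mathbf{0}$ instance of~(\ref{main:model}). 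By Theorem~\ref{integer1}, such $\mathbf{x}^*$ is unique and $\mathbf{0}+\mathbf{x}^*\Delta_q=\mathbf{x}^*\Delta_q$ lies in $SP(G,q)$, i.e.\ it is the recurrent configuration firing equivalent to $\mathbf{0}$.

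It then remains to identify this recurrent configuration with the group identity. I would argue as follows: under the isomorphism $SP(G,q)\cong\mathbb{Z}^{\widetilde{V}}/\langle\text{rows of }\Delta_q\rangle$ recalled in the introduction, the neutral element is the class $[\mathbf{0}]$; moreover, as also recalled there, each firing-equivalence class contains a unique recurrent configuration, and these are precisely the representatives with respect to which the operation $\oplus$ is taken. Since $\mathbf{x}^*\Delta_q\sim\mathbf{0}$ we have $[\mathbf{x}^*\Delta_q]=[\mathbf{0}]$, and since $\mathbf{x}^*\Delta_q$ is recurrent it must be \emph{the} recurrent representative of that class; hence $\mathbf{x}^*\Delta_q$ is the identity of $SP(G,q)$.

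There is essentially no hard step here, as the statement is a direct corollary of Theorem~\ref{integer1}; the only point deserving care is this last identification, where one must simultaneously invoke that recurrent configurations are the chosen class representatives of $SP(G,q)$ and that such a representative is unique within each class, so that lying in $[\mathbf{0}]$ together with being recurrent pins down the element uniquely. A secondary, purely bookkeeping, check is that $\mathbf{0}$ is indeed admissible as the input configuration of Theorem~\ref{integer1} and that the two linear programs coincide after the substitution $\mathbf{c}=\mathbf{0}$, both of which are immediate.
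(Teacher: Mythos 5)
Your proposal is correct and follows exactly the paper's own route: specialize Theorem~\ref{integer1} to $\mathbf{c}=\mathbf{0}$, note that $\mathbf{0}$ is stable and represents the neutral class, and conclude that the resulting recurrent configuration $\mathbf{x}^*\Delta_q$, being the unique recurrent representative of $[\mathbf{0}]$, is the identity of $SP(G,q)$. The paper states this in one sentence; your version merely makes the same bookkeeping explicit.
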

\begin{proof}
Since ${\bf 0}$ is stable and the identity of $\mathbb{Z}^{\widetilde{V}}$, then by Theorem~\ref{integer1}, ${\bf x}^* \Delta_q$ is recurrent and therefore the identity in $SP(G,q)$.
\end{proof}

In a similar way, the generators of $SP(G,q)$ can also be computed using Theorem~\ref{integer1} by taking ${\bf c}$ as the canonical vectors $\{ {\bf e}_i \}$.
Moreover, the degree in the group $SP(G,q)$ of any recurrent configuration can be computed by the following integer linear program.
Recall that the order of an element ${\bf c}$ of a group is the smallest positive integer $d$ such that $d{\bf c}={\bf e}$, where ${\bf e}$ denotes the identity element of the group.

\begin{Theorem}\label{coro:order}
Let $G$ be a graph with global sink $q\in V(G)$ and ${\bf c}$ be a configuration.
If ${\bf x}^*$ is an optimal solution of the integer linear problem:
\begin{eqnarray}\label{order:model}
\text{\rm minimize } & & d\nonumber\\
\text{\rm subject to }
                                   &  & {\bf x} \Delta_q = d{\bf c} ,\\
                                   &  & d\in \mathbb{N}_+, \nonumber\\
	                       & & {\bf x}\in \mathbb{Z}^{\widetilde{V}},\nonumber
\end{eqnarray}
then $d$ is the order of $[{\bf c}]$ in $SP(G,q)$.
\end{Theorem}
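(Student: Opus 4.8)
The plan is to show that the feasible values of $d$ in the program~(\ref{order:model}) are exactly the positive multiples of the order of $[{\bf c}]$, so that the minimum picks out the order itself. Recall that $SP(G,q)\cong \mathbb{Z}^{\widetilde{V}}/\langle\text{rows of }\Delta_q\rangle$, and under this isomorphism the class $[{\bf c}]$ corresponds to the image of ${\bf c}$. The order of $[{\bf c}]$ is therefore the least positive integer $d$ such that $d{\bf c}$ lies in the $\mathbb{Z}$-module generated by the rows of $\Delta_q$, i.e. such that $d{\bf c} = {\bf x}\Delta_q$ for some ${\bf x}\in\mathbb{Z}^{\widetilde{V}}$. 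That is precisely the constraint of the program, so the claim follows once this translation is made explicit.

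The key steps, in order, are: (i) observe that a positive integer $d$ is feasible for~(\ref{order:model}) if and only if there exists ${\bf x}\in\mathbb{Z}^{\widetilde{V}}$ with ${\bf x}\Delta_q = d{\bf c}$; (ii) reinterpret this as saying $d{\bf c}\sim {\bf 0}$ in the sense of firing equivalence, equivalently $d[{\bf c}] = [{\bf 0}]$ in $SP(G,q)$; (iii) note that multiplication in the group is compatible with scaling of representatives, since $d[{\bf c}] = [d{\bf c}]$ (the group operation $\oplus$ on recurrent representatives corresponds to addition modulo the row lattice, and $s$ only changes a vector by an element of that lattice); (iv) conclude that the set of feasible $d$ is $\{\, d\in\mathbb{N}_+ : d[{\bf c}] = {\bf e} \,\}$, which is exactly the set of positive multiples of $\mathrm{ord}([{\bf c}])$; (v) since this set is nonempty (the group is finite, so $|SP(G,q)|\cdot{\bf c}$ is always in the lattice) and bounded below, the minimum exists and equals $\mathrm{ord}([{\bf c}])$. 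One should also remark that although the variable ${\bf x}$ is not uniquely determined, $d$ is, which is all the statement asserts.

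I do not expect a serious obstacle here; the content is essentially the dictionary between the combinatorial group $SP(G,q)$ and the quotient lattice $\mathbb{Z}^{\widetilde{V}}/\Delta_q\mathbb{Z}^{\widetilde{V}}$, together with the elementary fact that the order of a group element $g$ is the minimal $d>0$ with $dg=e$, and that $\{d : dg = e\}$ is closed under divisibility-complements so equals $\mathrm{ord}(g)\mathbb{Z}\cap\mathbb{N}_+$. The one point deserving care is justifying $d[{\bf c}]=[d{\bf c}]$ rigorously: this uses that $[{\bf c}]+[{\bf c}] = [\,{\bf c}+{\bf c}\,]$ because $s({\bf c}+{\bf c}) - ({\bf c}+{\bf c})$ is an integer combination of rows of $\Delta_q$ (by definition of stabilization via toppling), hence $[{\bf c}]\oplus[{\bf c}] = [s({\bf c}+{\bf c})] = [{\bf c}+{\bf c}]$, and then induction on $d$ gives $d[{\bf c}] = [d{\bf c}]$. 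Everything else is routine.
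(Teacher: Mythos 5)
Your proposal is correct and follows essentially the same route as the paper's (much terser) proof: feasibility of $d$ in~(\ref{order:model}) means exactly that $d{\bf c}$ lies in the row lattice of $\Delta_q$, i.e.\ $d[{\bf c}]=[d{\bf c}]=[{\bf 0}]$, and the minimum such positive $d$ is by definition the order. Your added care about $d[{\bf c}]=[d{\bf c}]$ and the nonemptiness of the feasible set (finiteness of $SP(G,q)$) only makes explicit what the paper leaves implicit.
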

\begin{proof}
It follows because $[{\bf 0}]=[{\bf c}]$ if and only if ${\bf x} \Delta_q = {\bf c}$ and $d$ is the minimum positive integer such that $d[{\bf c}] =[d{\bf c}] = [{\bf 0}]$.
\end{proof}

Next example illustrates the previous results.

\begin{Example}
Let $C_5$ be the cycle with 5 vertices and global sink $q$ as in Figure~\ref{fig:C_5}.
By computing the Smith normal form of $\Delta_q(C_5)$, we know that 
\[
SP(C_5,q)\cong \mathbb{Z}_{5}.
\]
Now, let ${\bf e}_i$ denotes the vector with a 1 in the $i$-{\it th} vector and 0's elsewhere.
The solution of the integer linear program (\ref{main:model}) given in Theorem~\ref{integer1} with ${\bf c}$ equal to a vectors ${\bf e}_i$ or ${\bf 0}$ gives us the recurrent configurations for $SP(C_5,q)$.

In a similar way, the solution of the integer linear program (\ref{order:model}) given in Theorem~\ref{coro:order} gives us their orders, see Table~\ref{table1}.
%Note that the configuration ${\bf c}$ does not need to be stable, for instance, we can compute the recurrent configuration of configuration $(20,0,0,0)$, which is the identity.

\begin{table}[h!]
\begin{center}
\begin{tabular}{|c|c|c|c|}
\hline
${\bf c}$ & ${\bf x}^*$ & ${\bf c}+{\bf x}^*\Delta_q$ & order\\
\hline
(0,0,0,0) & (2,3,3,2) & (1,1,1,1) & 1\\
(1,0,0,0) & (1,2,2,1) & (1,1,1,0) & 5\\
(0,1,0,0) & (1,1,1,1) & (1,1,0,1) & 5\\
(0,0,1,0) & (1,1,1,1) & (1,0,1,1) & 5\\
(0,0,0,1) & (1,2,2,1) & (0,1,1,1) & 5\\
\hline
\end{tabular}
\end{center}
\caption{Recurrent configurations of $SP(C_5,q)$ and their orders.}
\label{table1}
\end{table}
\end{Example}

The interpretation of the recurrent configurations of a graph as optimal solution of a integer linear programming gives us a powerful concept of duality.
The dual problem of the relaxation of the integer linear program (\ref{identity:model}) is given by:
\begin{eqnarray}
\text{\rm minimize } & & \begin{bmatrix}\sigma_{max}^t & {\bf 0}^t \end{bmatrix}{\bf y}\nonumber\\
\text{\rm subject to } & & 
\begin{bmatrix}
\Delta_q & \Delta_q
\end{bmatrix}
{\bf y} = {\bf 1}\nonumber\\
                                   &  & {\bf y}_i\geq {\bf 0} \text{ for } i = 1,\dots, n\nonumber.\\
                                   &  & {\bf y}_i\leq {\bf 0} \text{ for } i = n+1,\dots, 2n\nonumber.
\end{eqnarray} 

As a consequence of the Weak Duality Theorem~\cite[Corollary 4.2]{bertsimas} we get the following result.

\begin{Proposition}
Let ${\bf x}$ and ${\bf y}$ be feasible solutions of the relaxation of (\ref{identity:model}) and its dual, respectively.
If 
\[
\begin{bmatrix}\sigma_{max}^t & {\bf 0}^t \end{bmatrix}{\bf y} = {\bf1}^t\cdot {\bf x},
\] 
then ${\bf  x}$ and ${\bf y}$ are optimal solutions of the relaxation of (\ref{identity:model}) and its dual, respectively.
\end{Proposition}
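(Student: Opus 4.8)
The plan is to obtain this directly from the Weak Duality Theorem cited just above the statement, together with the elementary observation that a feasible point whose objective value coincides with that of a feasible point of the dual must be optimal on each side.

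First I would pin down the precise form of weak duality for the pair at hand. The relaxation of (\ref{identity:model}) is the linear program
\[
\max\ {\bf 1}\cdot{\bf x}\quad\text{s.t.}\quad {\bf 0}\le {\bf x}\Delta_q\le \sigma_{max},\ {\bf x}\in \mathbb{R}^{\widetilde{V}},
\]
obtained by dropping the integrality requirement, and the problem displayed immediately after it is its dual. By \cite[Corollary 4.2]{bertsimas}, for \emph{every} primal feasible ${\bf x}'$ and \emph{every} dual feasible ${\bf y}'$ one has
\[
{\bf 1}^t\cdot{\bf x}'\ \le\ \begin{bmatrix}\sigma_{max}^t & {\bf 0}^t\end{bmatrix}{\bf y}'.
\]

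Next I would invoke the hypothesis $\begin{bmatrix}\sigma_{max}^t & {\bf 0}^t\end{bmatrix}{\bf y}={\bf 1}^t\cdot{\bf x}$ to conclude optimality on each side. For any primal feasible ${\bf x}'$, applying weak duality to the pair $({\bf x}',{\bf y})$ gives ${\bf 1}^t\cdot{\bf x}'\le \begin{bmatrix}\sigma_{max}^t & {\bf 0}^t\end{bmatrix}{\bf y}={\bf 1}^t\cdot{\bf x}$, so ${\bf x}$ attains the maximum of the primal objective over all feasible points and is therefore optimal. Symmetrically, for any dual feasible ${\bf y}'$, applying weak duality to $({\bf x},{\bf y}')$ gives $\begin{bmatrix}\sigma_{max}^t & {\bf 0}^t\end{bmatrix}{\bf y}'\ge {\bf 1}^t\cdot{\bf x}=\begin{bmatrix}\sigma_{max}^t & {\bf 0}^t\end{bmatrix}{\bf y}$, so ${\bf y}$ attains the minimum of the dual objective and is optimal.

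There is no genuinely hard step; the only point requiring care is the bookkeeping in setting up the dual, namely splitting the two-sided primal constraint into ${\bf x}\Delta_q\ge {\bf 0}$ and ${\bf x}\Delta_q\le \sigma_{max}$ and recording that, since the primal variables ${\bf x}$ are free, the first block of dual variables is forced nonnegative and the second nonpositive, so that the weak-duality inequality points in the direction used above. No a priori feasibility or boundedness of the two programs is needed: exhibiting one feasible pair with equal objective values already forces optimality, and in fact also certifies strong duality with common optimal value ${\bf 1}^t\cdot{\bf x}$.
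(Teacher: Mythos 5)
Your proof is correct and follows exactly the route the paper intends: the paper states the Proposition as an immediate consequence of the Weak Duality Theorem \cite[Corollary 4.2]{bertsimas} without writing out the argument, and your write-up simply spells out that standard deduction (weak duality bounds every primal feasible value by every dual feasible value, so equality of objectives certifies optimality on both sides). Your sign bookkeeping for the dual also matches the dual program displayed in the paper, so nothing further is needed.
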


We finish with a characterization of the identity in the special case of the cone of a $r$-regular graph.
Given a graph $G$, the cone of a graph $G$ is the graph $CG$ obtained from $G$ by adding a new apex vertex $u$ and the edges between $u$ and all the vertices of $G$.

\begin{Corollary}
Let $G$ be a $r$-regular graph.
If the global sink $q$ is the apex of $CG$, then $r\cdot {\bf 1}$ is the identity of $SP(CG,q)$.
\end{Corollary}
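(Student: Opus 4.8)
The plan is to apply Corollary~\ref{identidad}, using the weak‑duality Proposition stated just above it to exhibit an explicit optimal solution. Write $n=|V(G)|$; the non‑sink vertices of $CG$ are exactly the vertices of $G$. First I would record the combinatorial data of $CG$. Each $v\in V(G)$ has $d_{CG}(v)=d_G(v)+1=r+1$, so $\mathrm{deg}_{(CG,q)}=(r+1)\mathbf{1}$ and hence $\sigma_{max}=(r+1)\mathbf{1}-\mathbf{1}=r\mathbf{1}$. Deleting the apex row and column from the Laplacian of $CG$ gives $\Delta_q(CG)=\Delta(G)+I_n$, which is symmetric and nonsingular; since $G$ is $r$‑regular, every row and every column of $\Delta_q(CG)$ sums to $(r+1)-r=1$, i.e. $\Delta_q(CG)\mathbf{1}=\mathbf{1}$ and $\mathbf{1}\,\Delta_q(CG)=\mathbf{1}$. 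This identity is the computational core of the proof.

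Next I would produce a candidate optimal primal–dual pair. For the relaxation of~(\ref{identity:model}), take $\mathbf{x}=r\mathbf{1}$: then $\mathbf{x}\,\Delta_q(CG)=r\bigl(\mathbf{1}\,\Delta_q(CG)\bigr)=r\mathbf{1}=\sigma_{max}$, so $\mathbf{0}\le\mathbf{x}\,\Delta_q(CG)\le\sigma_{max}$ and $\mathbf{x}$ is feasible with objective value $\mathbf{1}\cdot\mathbf{x}=rn$. For the dual displayed in the paper, take $\mathbf{y}\in\mathbb{R}^{2n}$ whose first $n$ coordinates equal $1\ge 0$ and whose last $n$ coordinates equal $0\le 0$; then $[\,\Delta_q(CG)\ \Delta_q(CG)\,]\mathbf{y}=\Delta_q(CG)\mathbf{1}=\mathbf{1}$, so $\mathbf{y}$ is dual feasible, and its objective value is $[\,\sigma_{max}^t\ \mathbf{0}^t\,]\mathbf{y}=\sigma_{max}\cdot\mathbf{1}=rn$. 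The two objective values agree, so by the weak‑duality Proposition $\mathbf{x}=r\mathbf{1}$ is an optimal solution of the relaxation of~(\ref{identity:model}), and the optimum value of that linear program is $rn$.

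Finally I would transfer this back to the integer program. Since $r\mathbf{1}$ is integral and feasible for~(\ref{identity:model}) and attains the value $rn$, which already equals the optimum of the relaxation and therefore dominates the integer optimum, $r\mathbf{1}$ is an optimal solution of the integer program~(\ref{identity:model}); by the uniqueness assertion of Corollary~\ref{identidad} it is \emph{the} optimal solution $\mathbf{x}^*$. Consequently the identity of $SP(CG,q)$ equals $\mathbf{x}^*\,\Delta_q(CG)=r\mathbf{1}\,\Delta_q(CG)=r\mathbf{1}$, using once more that the columns of $\Delta_q(CG)$ sum to $1$.

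The only genuinely delicate point is this last step: the Proposition only certifies optimality for the linear relaxation, and one must observe that the explicit optimizer $r\mathbf{1}$ is itself integral, so it also realizes the integer optimum required by Corollary~\ref{identidad}. (One can sidestep duality altogether: $r\mathbf{1}=\sigma_{max}$ is recurrent, and since $2r\mathbf{1}\ge\mathrm{deg}_{(CG,q)}$ while $2r\mathbf{1}=r\mathbf{1}+(r\mathbf{1})\Delta_q(CG)\sim r\mathbf{1}$, the configuration $s(2r\mathbf{1})$ is recurrent and firing equivalent to $r\mathbf{1}$, hence equals $r\mathbf{1}$ by uniqueness of the recurrent representative; thus $r\mathbf{1}\oplus r\mathbf{1}=r\mathbf{1}$, so $r\mathbf{1}$ is idempotent and therefore the group identity.)
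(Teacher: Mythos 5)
Your main argument is exactly the paper's proof: the same primal--dual pair $\mathbf{x}=r\mathbf{1}$ and $\mathbf{y}=(\mathbf{1},\mathbf{0})$, certified optimal by the weak-duality Proposition and then transferred to the integer program because the optimizer happens to be integral; you merely make explicit the feasibility checks ($\sigma_{max}=r\mathbf{1}$, $\mathbf{1}\,\Delta_q(CG)=\mathbf{1}$) that the paper leaves implicit. Your parenthetical duality-free idempotency argument is a correct alternative (except in the degenerate case $r=0$, where $2r\mathbf{1}\ge\mathrm{deg}_{(CG,q)}$ fails), but it is not needed for the main line of reasoning.
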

\begin{proof}
Let ${\bf x}=r{\bf 1}$ and ${\bf y}=\begin{bmatrix}{\bf 1}\\{\bf 0}\end{bmatrix}$.
Since $\Delta_q(CG) {\bf 1} = {\bf 1}$, $r{\bf 1}$ is a feasible solution of the linear relaxation of (\ref{identity:model}) and ${\bf y}$ is a solution of its dual.
On the other hand, their costs are equal to 
\[
r{\bf 1}\cdot {\bf 1} =r\cdot n=\sigma_{max}^t \cdot {\bf 1}.
\]
Therefore, by Corollary~\ref{identidad}, ${\bf x}$  is an optimal solution of the relaxation of the integer linear program $(\ref{identity:model})$.
Since the solution is integral, then $r{\bf 1}$ is the identity of the sandpile group of $(CG,q)$. 
\end{proof}

\section*{Acknowledgments}
The authors were partially supported by SNI and CONACyT.

%\begin{Problem}
%Give an alternative more general definition of a sandpile group using the interpretation integer linear programming and its associated polyhedra geometry.
%\end{Problem}
%
%\begin{Problem}
%Give an explicit description of the identity (recurrent configuration that play the role of identity in the sandpile group) of the grid graph.
%\end{Problem}
%
%There exits a very interesting relationship between the sandpile group and polyminoes, see~\cite{Dukes2,Dukes}.
%
%\begin{Problem}
%Explore integer linear programming and its possibles relation with polyminoes through the relationship between the sandpile group and polyminoes.
%\end{Problem}
%
%\begin{Problem}
%Almost nothing is known about the recurrent configurations of the sandpile group, it would be interesting to find relations in terms of the polyminoes.
%\end{Problem}
%
%\begin{Problem}
%Explore the behavior of polyminoes/recurrent configurations over graph operations, like duplication and replication of vertices and subdivision of edges.
%\end{Problem}

%==================================================================%
%==========================Bibliografia===========================%
%==================================================================%
\section*{\refname}

\bibliographystyle{abbrv}%{amsalpha}
\bibliography{biblio}

\end{document}